\theoremstyle{plain}
\newtheorem{theorem}{Theorem}[section]
\newtheorem{prop}[theorem]{Proposition}
\newtheorem{corollary}[theorem]{Corollary}
\theoremstyle{remark}
\newtheorem{remark}[theorem]{Remark}
\theoremstyle{definition}
\newtheorem{definition}[theorem]{Definition}
\newtheorem{example}[theorem]{Example}
\newcommand{\CC}{{\mathbb{C}}}
\newcommand{\QQ}{{\mathbb{Q}}}
\newcommand{\ZZ}{{\mathbb{Z}}}
\newcommand{\Gr}{{\mathop{\mathrm{Gr}}}}
\newcommand{\Fl}{{\mathop{\mathrm{Fl}}}}
\newcommand{\SL}{{\mathop{\mathrm{SL}}}}
\DeclareMathOperator{\rk}{rk}
\DeclareMathOperator{\lct}{lct}
\DeclareMathOperator{\Pic}{Pic}
\def\udot {{\:\raisebox{3pt}{\text{\circle*{1.5}}}}}
\def \leq {\leqslant}
\def \geq {\geqslant}
\title[Flag varieties and Hwang's product theorem]{Singularities of divisors on flag varieties\\ via Hwang's product theorem}
\author{Evgeny Smirnov}
\email{esmirnov@hse.ru}
\address{Faculty of Mathematics and Laboratory of Algebraic Geometry and its Applications, National Research University Higher School of Economics, \mbox{Usacheva} str., 6, Moscow 119048, Russia}
\address{Independent University of Moscow, Bolshoi Vlassievskii per., 11, Moscow 119002, Russia}
\date{\today}
\thanks{The study was partially supported by the Russian Academic Excellence Project \mbox{``5--100''}, Simons--IUM fellowship (\S 1--3), and RSF grant, project 14-21-00053 dated 11.08.14 (\S4).}
\begin{document}

\begin{abstract}
We give an alternative proof of a recent result by B.~Pasquier stating that for a generalized flag variety $X=G/P$ and an effective $\QQ$-divisor $D$ stable with respect to a Borel subgroup the pair~\mbox{$(X,D)$} is Kawamata log terminal if and only if~\mbox{$\lfloor D\rfloor=0$}.
\end{abstract}

\maketitle

\section{Introduction}

Let $G$ be a connected reductive algebraic group over $\CC$. Recall that a horospherical $G$-variety $X$ is a normal $G$-variety with an open $G$-orbit isomorphic to a torus fibration~\mbox{$G/H$} over a flag variety $G/P$, where $P$ is a parabolic subgroup in $G$ and $P/H$ is a torus. In \cite{Pasquier15} Boris Pasquier shows that for a horospherical variety $X$ and an effective $\QQ$-divisor $D$ stable with respect to a
Borel subgroup the pair $(X,D)$ is Kawamata log terminal if and only if $\lfloor D\rfloor=0$.

An essential part of the proof is the case when $X$ itself is a flag variety~$G/P$.
In this case Pasquier uses a Bott--Samelson resolution to provide an explicit log resolution of the pair~\mbox{$(G/P,D)$},
and to check that this pair is Kawamata log terminal using rather heavy combinatorics related to root systems. Further on,
he uses the latter resolution to provide a log resolution of a general horospherical pair,
and he uses Kawamata log terminality of~\mbox{$(G/P,D)$} to establish the same result in general.

The main purpose of this note is to prove a similar result for a variety~\mbox{$G/P$} avoiding explicit
log resolutions, and instead using the Product Theorem for log canonical thresholds
due to Jun-Muk Hwang, see~\cite{Hwang07}. In particular, we will not assume that the $\QQ$-divisor $D$ is stable under the action of the Borel subgroup.
Note that while this approach does not allow one to get rid of Bott--Samelson resolutions because they are
needed for the case of general horospherical varieties, it does allow to avoid the computations from~\mbox{\cite[\S5]{Pasquier15}}
and to replace them by easier computations of Proposition~\ref{prop:restriction} below.

The plan of the paper is as follows.
In \S\ref{section:lct} we recall definitions
and properties of log canonical thresholds.
In \S\ref{section:notation} we recall the basic facts about geometry of
flag varieties and give a precise statement of our main result,
which is Theorem~\ref{thm:main}.
In \S\ref{section:proof} we introduce more notation and prove the main theorem.

I am grateful to I.\,Cheltsov, Yu.\,Prokhorov and especially C.~Shramov for very useful discussions and to M.~Brion and B.~Pasquier for valuable comments.

\section{Log canonical thresholds}
\label{section:lct}

In this section we recall definitions and some properties of log canonical
thresholds. We refer a reader to~\cite[\S8]{Kollar97} for (much) more details.

Let $X$ be a smooth
complex algebraic variety, $D$ be an effective $\QQ$-divisor. Choose a point $x\in X$.

\begin{definition}\label{definition:lct}
If $D$ is a Cartier divisor locally defined
by the equation~\mbox{$f=0$}, then the log canonical threshold $\lct_x(D)$ of $D$ near $x$ is defined by
\[
\lct_x(D)=\sup\left\{c>0\ \left| \  \frac{1}{|f|^{c}}\in L^2_{loc}\right.\right\}.
\]
In particular, if $x$ is not contained in the support of~$D$, we put~\mbox{$\lct_x(D)=+\infty$}.

If $D$ is an arbitrary $\QQ$-divisor, then we define $\lct_x(D)=\lct_x(rD)/r$ for sufficiently divisible
integer $r$.
The log canonical threshold $\lct(X,D)$ of $D$ is defined as the infimum of~\mbox{$\lct_x(D)$} over all~\mbox{$x\in X$}.
\end{definition}

\begin{definition} The pair $(X,D)$ is said to be \emph{Kawamata log terminal} if the inequality~\mbox{$\lct(X,D)>1$}
holds, and \emph{log canonical} if the inequality~\mbox{$\lct(X,D)\geq 1$} holds.
\end{definition}

Let $L\in\Pic(X)$ be a line bundle such that the linear system $|L|$ is non-empty.
We define $\lct(X,L)$ as the infimum $\inf_{\Delta\in |L|}\lct(X,\Delta)$.
The following theorem is taken from \cite{Hwang07}.

\begin{theorem}[{see \cite[\S2]{Hwang07}}]
\label{theorem:Hwang}
Let $f\colon X\to Y$ be a smooth projective morphism between two smooth projective varieties, $y\in Y$, and $X_y=f^{-1}(y)$ be the fiber over $y$. Let~$D$ be an effective divisor on $X$, and let $L$ be the restriction
of~\mbox{$\mathcal{O}_X(D)$} to $X_y$. Then one of the following holds:
\begin{itemize}
\item[(i)] either $\lct_x(D)\geq\lct(X_y,L)$ for each $x\in X_y$,

\item[(ii)] or $\lct_{x_1}(D)=\lct_{x_2}(D)$ for any two points $x_1,x_2\in X_y$.
\end{itemize}
\end{theorem}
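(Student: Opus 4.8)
The plan is to study the function $\phi(x)=\lct_x(D)$ on $X_y$ and to read off the dichotomy from the way $D$ meets the fibre. First I would decompose $D$, in a neighbourhood of $X_y$, as $D=D_h+D_v$, where $D_v$ gathers the prime components that contain $X_y$ and $D_h$ gathers the rest, so that $X_y\not\subseteq\supp D_h$. Since $f$ is smooth, every component of $D_v$ is, near $X_y$, the preimage of a divisor on $Y$ through $y$, whence $D_v=f^*B$ for an effective divisor $B$ on $Y$. Restricting line bundles, $\mathcal{O}_X(f^*B)|_{X_y}$ is trivial, so $L=\mathcal{O}_X(D_h)|_{X_y}=\mathcal{O}_{X_y}(D_h|_{X_y})$; in particular $D_h|_{X_y}$ is a member of $|L|$ and $\lct(X_y,L)\le\lct_x(X_y,D_h|_{X_y})$ for every $x\in X_y$.

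Next I would identify the generic value of $\phi$. On the dense open set $U=X_y\setminus\supp D_h$ the divisor $D$ agrees with $f^*B$, which is constant along the fibres of the smooth morphism $f$; hence $\phi(x)=\lct_y(Y,B)$ for all $x\in U$. As $\phi$ is lower semicontinuous and takes finitely many values, its largest value on $X_y$ is attained on a dense open subset, which meets $U$; therefore this value equals $c_0:=\lct_y(Y,B)$ and $\phi\le c_0$ everywhere, the drop locus being contained in $\supp D_h\cap X_y$.

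The technical heart, and what I expect to be the main obstacle, is the pointwise lower bound
\[
\lct_x(X,\,D_h+f^*B)\ \ge\ \min\bigl(\lct_y(Y,B),\ \lct_x(X_y,D_h|_{X_y})\bigr),\qquad x\in X_y.
\]
Its content is that a base singularity (pulled back, hence sharing no component with $D_h$) and a fibre singularity of $D_h$ do not reinforce each other below their minimum. I would prove it analytically, in local product coordinates $(z,w)$ adapted to $f$, in which $X_y=\{w=0\}$ and $f^*B=\{b(w)=0\}$ with $b$ a function of the base coordinates alone: for $c$ below the right-hand side one integrates $|h\,b|^{-2c}$ by first integrating over the fibre and then over the base. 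The inner fibre integrals stay integrable because $c<\lct_x(X_y,D_h|_{X_y})$, and lower semicontinuity of log canonical thresholds in the family $\{D_h|_{f^{-1}(w)}\}$ controls their degeneration as $w\to y$, after which the factor $|b(w)|^{-2c}$ supplies integrability in the base directions since $c<\lct_y(Y,B)$. The delicate point is exactly the uniform control of the fibre integrals near $y$, where a crude Hölder splitting yields only the weaker harmonic-mean bound; an inversion-of-adjunction argument along the smooth divisors cutting out $X_y$ is an alternative route to the same inequality.

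Granting the displayed bound, the theorem follows by comparing $c_0$ with $\lct(X_y,L)$. Combining it with $\lct_x(X_y,D_h|_{X_y})\ge\lct(X_y,L)$ gives $\phi(x)\ge\min\bigl(c_0,\lct(X_y,L)\bigr)$ for all $x\in X_y$. If $c_0\ge\lct(X_y,L)$ this reads $\phi(x)\ge\lct(X_y,L)$ everywhere, which is alternative~(i); if $c_0<\lct(X_y,L)$ it gives $\phi(x)\ge c_0$, and since also $\phi\le c_0$ we get $\phi\equiv c_0$, which is alternative~(ii). As a sanity check, when $D_v=0$ one has $B=0$ and $c_0=+\infty$, so the displayed inequality collapses to the restriction theorem for multiplier ideals and forces~(i); the dichotomy is genuine only in the presence of vertical components, whose constant contribution along $X_y$ is what can pin $\phi$ to the single value $c_0$.
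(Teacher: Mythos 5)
Your proposal has a genuine gap, and it occurs at the very first step. (A preliminary remark: the paper itself offers no proof of Theorem~\ref{theorem:Hwang}; it is imported from Hwang's article, where it is established by an analytic argument resting on the Demailly--Koll\'ar semicontinuity theorem for complex singularity exponents. So your argument has to stand on its own.) You assert that, because $f$ is smooth, every prime component of $D$ whose support contains $X_y$ is, near $X_y$, the preimage of a divisor on $Y$ through $y$, whence $D_v=f^*B$. This is correct essentially only when $\dim Y=1$, where a prime divisor containing the (connected) fibre must equal $f^{-1}(y)$. For $\dim Y\geq 2$ the fibre has codimension $\geq 2$, and a divisor can contain $X_y$ while \emph{dominating} $Y$: take $X=\mathbb{P}^2\times\mathbb{P}^1\to Y=\mathbb{P}^2$ with coordinates $[a:b:c]$ on $Y$ and $[s:t]$ on the fibre, $y=[0:0:1]$, and $D=\{at-bs=0\}$. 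This $D$ is irreducible and smooth, contains $X_y=\{y\}\times\mathbb{P}^1$, and meets every nearby fibre in exactly one point (it is the blow-up of $\mathbb{P}^2$ at $y$); it is nowhere a pullback near $X_y$. For such a $D$ your decomposition does not exist: $D_v=D$, $D_h=0$, and there is no $B$ with $f^*B=D_v$, so the generic value $c_0=\lct_y(Y,B)$, the key inequality, and the final case analysis all lose their meaning. This is not a removable technicality: horizontal components passing through the special fibre are exactly the configuration that makes the theorem delicate, and the fibrations $\pi_\alpha\colon G/P\to G/P'$ to which the paper applies the theorem have high-dimensional bases, so the missing case is the relevant one.

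Second, even in the regime where your decomposition is available, the inequality you call the technical heart is asserted rather than proved. The uniform control of the inner fibre integrals as the base point tends to $y$ --- which you yourself flag as the delicate point --- does not follow from lower semicontinuity of the numbers $\lct(X_w,D_h|_{X_w})$; what is needed is uniform integrability of $e^{-2c\phi}$ along the slices, and that is precisely the effective form of the Demailly--Koll\'ar semicontinuity theorem (proved via Ohsawa--Takegoshi). In other words, at its only hard point the sketch appeals to the very result that powers Hwang's actual proof, so it assumes the substance of what is to be shown. Your alternative suggestion, inversion of adjunction along the smooth divisors cutting out $X_y$, can indeed be made to yield the key inequality when $D_v$ really is a pullback (reduce to $B$ simple normal crossings by pulling back a log resolution of $(Y,B)$ along the smooth morphism $f$, then apply adjunction one divisor at a time), but you do not carry this out, and in any case no treatment of the key inequality can rescue the argument as long as the decomposition $D=D_h+f^*B$ on which everything downstream rests is unavailable.
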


One can define the global log canonical threshold
$$
\lct(X)=\inf\left\{\lct(X,\Delta)\mid \Delta\sim_{\QQ} -K_X \text{\ is an effective $\QQ$-divisor\ }  \right\},
$$
where $-K_X$ is the anticanonical class of $X$. This
definition makes sense if some positive multiple of $-K_X$ is effective;
for example, this holds for Fano varieties
and for spherical varieties (see \cite[\S4]{Brion97}).
For more properties of $\lct(X)$ see \cite{CheltsovShramov08};
for its relation to the $\alpha$-invariant of Tian see~\cite[Appendix~A]{CheltsovShramov08}.

\section{Flag varieties}
\label{section:notation}

Let $G$ be a connected reductive algebraic group. We fix a Borel subgroup~\mbox{$B$} in $G$ and a maximal torus $T\subset B$. Denote by $R$ the root system of~$G$, and by $S\subset R$ the set of simple roots in $R$, where the positive roots are the roots of $(B,T)$. The Weyl group of $R$ will be denoted by $W$; let~\mbox{$\ell\colon W\to \ZZ_{\geq 0}$} be the length function on $W$.

Let $P\supset B$ be a parabolic subgroup in $G$. Then $G/P$ is a (generalized) partial flag variety. Denote by $I$ the set of simple roots of the Levi subgroup of $P$; in particular, for $P=B$ we have $I=\varnothing$ and for  $P$ maximal  the set~$I$ is obtained from $S$ by removing exactly one simple root. For example, if~\mbox{$G=\SL_n(\CC)$}, then for the maximal parabolic subgroup $P$ corresponding to~\mbox{$S\setminus\{\alpha_k\}$, $1\leq k\leq n-1$}, the homogeneous space $G/P$ is the Grassmannian~\mbox{$\Gr(k,n)$}.

For a subset $I\subset S$ of the set of simple roots, let $W_P\subset W$ be the subgroup of  $W$ generated by the simple reflections $s_\alpha$, where $\alpha\in I$. In each left coset from $W/W_P$ there exists a unique element of minimal length. Denote the set of such elements by $W^P$; we will identify it with $W/W_P$. It is well-known
(see, for instance, \cite[\S1.2]{Brion05})
that the partial flag variety $G/P$ admits a Schubert decomposition into orbits of~$B$, and the orbits are indexed by the elements of $W^P$:
\[
G/P=\bigsqcup_{w\in W^P} BwP/P.
\]
Moreover, the dimension of the cell $BwP/P$ equals the length of $w$. The closures of these cells are called \emph{Schubert varieties}; we denote them by $Y_w=\overline{BwP/P}$.

Denote by $w_0$ and $w_0^P$ the longest elements in $W$ and $W_P$, respectively. Then the length $\ell(w_0w_0^P)$ is the dimension of $G/P$. We shall also need the \emph{opposite Schubert varieties} 
$$
Y^w=\overline{w_0Bw_0w P/P}=w_0 Y_{w_0w}=w_0 Y_{w_0w}.
$$

If $w\in W^P$, then $w_0ww_0^P$ also belongs to $W^P$ (i.e., is the shortest representative in its left coset $w_0w W_P=w_0ww_0^P W_P$); in this case $\dim Y_w=\ell(w)$ and 
$$
\dim Y^w=\ell(w_0w_0^P)-\ell(w).
$$ 
From the definition of $Y_w$ and $Y^w$ we readily see that the cohomology classes~$[Y_w]$ and $[Y^{w_0ww_0^P}]$ in $H^\udot(G/P,\ZZ)$ are equal.

Irreducible $B$-stable divisors of $G/P$ are the Schubert varieties of codimension 1. Denote them by
$$
D_\alpha=\overline{B w_0 s_\alpha w_0^P P/P}=w_0 Y^{s_\alpha}.
$$

The following proposition is a standard fact on Schubert varieties (cf. \mbox{\cite[\S1.4]{Brion05}}).
\begin{prop}\label{prop:schubert}
\begin{itemize}

\item[(i)] The divisors $D_\alpha$ for $\alpha\notin I$
freely generate~\mbox{$\Pic(G/P)$}, so one has $\rk\Pic(G/P)=|S\setminus I|$. In particular, for~$P$ maximal  one has $\Pic(G/P)\cong \ZZ$, and there is a unique $B$-stable prime divisor.

\item[(ii)] The classes of Schubert varieties $[Y_w]\in H^\udot(G/P,\ZZ)$ freely 
gene\-rate~\mbox{$H^\udot(G/P,\ZZ)$} as an abelian group. The elements of this basis are Poincar\'e dual to the classes of the corresponding opposite Schubert varieties: if $w,v\in W^P$ and $\ell(w)=\ell(v)$, then
\[
 [Y^w]\smile [Y_v]=\delta_{wv}\qquad \text{for each }w,v\in W.
\]
In particular, the classes of one-dimensional Schubert varieties, that is, of $B$-stable curves  $\overline{Bs_\alpha P/P}\subset G/P$, are dual to the classes of divisors:
\[
D_\alpha\smile [\overline{Bs_\beta P/P}]=\delta_{\alpha\beta}\qquad\text{for each }\alpha,\beta\in S\setminus I.
\]
\end{itemize}
\end{prop}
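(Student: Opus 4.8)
The plan is to deduce both parts from the Schubert cell decomposition $G/P=\bigsqcup_{w\in W^P}BwP/P$ recalled above, in which each cell $BwP/P$ is isomorphic to the affine space $\mathbb{A}^{\ell(w)}$. I would treat part~(ii) first. Since $G/P$ carries such an affine paving with cells only in even real dimension, a standard argument (for instance via the long exact sequences of the pairs of skeleta, or via Bialynicki--Birula theory) shows that the odd cohomology vanishes, that $H^\udot(G/P,\ZZ)$ is torsion-free, and that the fundamental classes $[Y_w]$ of the cell closures form a free $\ZZ$-basis. This gives the first assertion of~(ii).

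For the duality pairing I would compute the intersection numbers $[Y^w]\smile[Y_v]$ geometrically. When $\ell(w)=\ell(v)$ the cycles have complementary dimensions $\dim Y_v=\ell(v)$ and $\dim Y^w=\dim G/P-\ell(w)$, so the cup product is the degree of the zero-cycle $Y_v\cap Y^w$. The key input is the combinatorics of intersections of Schubert with opposite Schubert varieties: the Richardson variety $Y_v\cap Y^w$ is nonempty if and only if $w\leq v$ in the Bruhat order, in which case it has dimension $\ell(v)-\ell(w)$ and contains the $T$-fixed point $vP/P$. Hence for $\ell(w)=\ell(v)$ the intersection is empty unless $w=v$, and for $w=v$ it is the single point $vP/P$. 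It then remains to check that this intersection is transverse, so that the point is counted with multiplicity one: this holds because $Y_v$ is $B$-stable while $Y^v$ is stable under the opposite Borel $B^-=w_0Bw_0$, and the tangent spaces to $Y_v$ and $Y^v$ at $vP/P$ are spanned by complementary sets of root subspaces of $T_{vP/P}(G/P)$. This yields $[Y^w]\smile[Y_v]=\delta_{wv}$, so $\{[Y^w]\}$ is the basis dual to $\{[Y_v]\}$. Specializing to $\ell(w)=1$, i.e. $w=s_\alpha$ with $\alpha\in S\setminus I$, and recalling $D_\alpha=w_0Y^{s_\alpha}$ together with the fact that translation by the connected group $G$ acts trivially on cohomology (so $[D_\alpha]=[Y^{s_\alpha}]$), gives the stated duality between the $D_\alpha$ and the $B$-stable curves $\overline{Bs_\beta P/P}$.

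Finally, for part~(i) I would pass from cohomology to the Picard group. Because $G/P$ is smooth and projective with $H^1(G/P,\mathcal{O})=H^2(G/P,\mathcal{O})=0$ (both vanish since $H^\udot$ is concentrated in even degree and of Hodge--Tate type), the exponential sequence identifies $\Pic(G/P)$ with $H^2(G/P,\ZZ)$, which is torsion-free by the affine paving. By part~(ii) this group is freely generated by the codimension-one Schubert classes, and these coincide with the classes of the $B$-stable prime divisors $D_\alpha$ for $\alpha\in S\setminus I$ (via $[D_\alpha]=[Y^{s_\alpha}]$ and the identification $[Y_w]=[Y^{w_0ww_0^P}]$), of which there are exactly $|S\setminus I|$. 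Hence the $D_\alpha$ freely generate $\Pic(G/P)$ and $\rk\Pic(G/P)=|S\setminus I|$; for $P$ maximal $|S\setminus I|=1$, giving $\Pic(G/P)\cong\ZZ$ and a unique $B$-stable prime divisor.

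I expect the transversality of the Schubert and opposite Schubert varieties at $vP/P$ to be the main technical point, as the rest is bookkeeping with the cell decomposition and the Bruhat order; it is precisely this transversality that makes the topological cup product computable as a naive count of intersection points.
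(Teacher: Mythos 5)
The paper itself gives no proof of this proposition: it is stated as ``a standard fact on Schubert varieties'' with a pointer to \cite[\S 1.4]{Brion05}, so there is no in-paper argument to compare against. Your proof is correct and is essentially the standard one that the cited reference supplies: the Bruhat/affine paving gives the free Schubert basis of $H^\udot(G/P,\ZZ)$ concentrated in even degrees; the pairing $[Y^w]\smile[Y_v]=\delta_{wv}$ follows from the Richardson-variety dichotomy (empty unless $w\leq v$) together with transversality of $Y_v$ and $Y^v$ at $vP/P$, where the tangent spaces are spanned by complementary sets of $T$-weight spaces; and $\Pic(G/P)\cong H^2(G/P,\ZZ)$ via the exponential sequence, using $H^1(\mathcal{O}_{G/P})=H^2(\mathcal{O}_{G/P})=0$, which you correctly deduce from the fact that the cohomology is generated by algebraic (Schubert) classes. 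In short, your write-up fills in exactly what the paper delegates to the literature, including the one genuinely non-formal point (the transversality making the intersection count equal to one), and the bookkeeping identifications $[D_\alpha]=[Y^{s_\alpha}]$ via translation by the connected group $G$ are handled correctly.
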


\smallskip
The purpose of our paper is to give a new proof of the following result.

\begin{theorem}[{see \cite[Theorem~3.1]{Pasquier15}}]
\label{thm:main} Let $D\sim\sum a_\alpha D_\alpha$, where $a_\alpha$ are non-negative rational numbers,
be an effective non-zero $\QQ$-divisor. Then
\begin{equation*}
\lct(G/P,D)\geq \frac{1}{\max a_{\alpha}}.
\end{equation*}
In particular,
the pair $(G/P,D)$ is Kawamata log terminal provided that all $a_\alpha$ are less than 1.
\end{theorem}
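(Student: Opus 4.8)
The plan is to reduce to the full flag variety $G/B$ and then run an induction on the number of nonzero coefficients, using a $\mathbb{P}^1$-fibration together with Hwang's dichotomy (Theorem~\ref{theorem:Hwang}) at each step. First I would reduce to $X=G/B$: the projection $\rho\colon G/B\to G/P$ is smooth, and since log canonical thresholds are preserved under smooth pullback one has $\lct(G/B,\rho^*D)=\lct(G/P,D)$; moreover $\rho^*D_\alpha$ is again the Schubert divisor $D_\alpha$ on $G/B$ for $\alpha\in S\setminus I$, so $\rho^*D\sim\sum_{\alpha\in S\setminus I}a_\alpha D_\alpha$ and $\max a_\alpha$ is unchanged. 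Thus it suffices to prove the bound on $G/B$, where we now allow some coefficients (those indexed by $\alpha\in I$) to vanish, and I would induct on $k=\#\{\alpha\in S: a_\alpha>0\}$.

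Set $a=\max_\alpha a_\alpha$ and fix $\beta$ with $a_\beta=a$. I would use the fibration $\pi\colon G/B\to Y:=G/P_\beta^{\mathrm{min}}$ onto the partial flag variety of the minimal parabolic associated with $\beta$; its fibers are the $B$-stable curves $\overline{Bs_\beta B/B}\cong\mathbb{P}^1$. By Proposition~\ref{prop:schubert}(ii) the restriction $L$ of $\mathcal{O}(D)$ to a fiber has degree $D\cdot\overline{Bs_\beta B/B}=a_\beta=a$, so $\lct(\mathbb{P}^1,L)=1/a$. Applying Theorem~\ref{theorem:Hwang} fiber by fiber, in alternative~(i) we get $\lct_x(D)\geq\lct(\mathbb{P}^1,L)=1/a$ immediately.

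The main work is alternative~(ii), where $\lct_x(D)$ is constant along the fiber $X_y$, so it is enough to estimate it at one convenient point. Writing $D=D^{\mathrm{hor}}+\pi^*B$ as the sum of its horizontal part (components dominating $Y$) and its vertical part $\pi^*B$ with $B\geq 0$ on $Y$, I would choose a general $x_0\in X_y$: if $X_y\not\subseteq\supp D$ then $x_0\notin\supp D$ and $\lct_{x_0}(D)=+\infty$; otherwise $D^{\mathrm{hor}}$ meets $X_y$ in finitely many points, so near $x_0$ one has $D=\pi^*B$ and hence $\lct_{x_0}(D)=\lct_y(Y,B)$ by smoothness of $\pi$. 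Comparing coefficients by intersecting with the dual $B$-curves $\overline{Bs_\gamma B/B}$ shows $B\sim\sum_{\gamma\neq\beta}b_\gamma D_\gamma$ with $0\leq b_\gamma\leq a_\gamma$ and with vanishing $\beta$-coefficient, so $B$ has strictly fewer nonzero coefficients than $D$ and $\max b_\gamma\leq a$. Using $\lct(Y,B)=\lct(G/B,\pi^*B)$ and the induction hypothesis on $k$, I conclude $\lct_y(Y,B)\geq\lct(Y,B)\geq 1/\max b_\gamma\geq 1/a$.

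The base case $k=1$ (a single nonzero coefficient, $D\sim aD_\beta$) falls out of the same analysis: the vertical part $B$ must then vanish, so alternative~(ii) gives $\lct=+\infty$ while alternative~(i) gives $1/a$. Putting the two alternatives together yields $\lct_x(D)\geq 1/a$ for every $x$, hence $\lct(G/B,D)\geq 1/\max a_\alpha$, which by the first step proves the theorem. I expect the delicate point to be precisely alternative~(ii) with the whole fiber contained in $\supp D$; the horizontal/vertical decomposition and the induction on the number of nonzero coefficients are what let Hwang's product theorem replace Pasquier's explicit Bott--Samelson computations.
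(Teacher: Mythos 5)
Your reduction to $G/B$ and the use of $\mathbb{P}^1$-fibrations is an attractive idea (it would replace the deep input Theorem~\ref{thm:hwanggr} by the elementary equality $\lct(\mathbb{P}^1,\mathcal{O}_{\mathbb{P}^1}(a))=1/a$), but your treatment of alternative~(ii) of Theorem~\ref{theorem:Hwang} has a genuine gap: the claim that if $X_y\subseteq\supp D$ then $D^{\mathrm{hor}}$ meets $X_y$ in finitely many points. A prime divisor that dominates $Y$ can perfectly well contain \emph{special} fibers of a $\mathbb{P}^1$-fibration. Concretely, let $G=\SL_3(\CC)$, $X=G/B=\Fl(1,2)=\{U_1\subset U_2\subset\CC^3\}$, let $\pi\colon X\to\mathbb{P}^2$ be the map forgetting $U_2$, fix a line $\ell_0\subset\CC^3$, and put $E=\{(U_1,U_2)\mid \ell_0\subset U_2\}$. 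This is a ($G$-translate of a) Schubert divisor with $E\cdot[\text{$\pi$-fiber}]=1$, so it is horizontal --- over any $U_1\neq\ell_0$ it meets the fiber in the single point $U_2=U_1+\ell_0$ --- and yet it contains the entire fiber $\pi^{-1}(\ell_0)$. Thus for $D=aE$ (your base case $k=1$, with $\beta$ exactly the root whose fibration is $\pi$) the vertical part is $B=0$, yet near every point of $\pi^{-1}(\ell_0)$ the divisor $D$ is certainly not $\pi^{*}B=0$; your argument would output $\lct=+\infty$ along that fiber, which is false, since the fiber lies in $\supp D$. The same failure infects the inductive step: whenever a horizontal component contains $X_y$, the local identification $D=\pi^{*}B$ near a general $x_0\in X_y$ breaks down, and you control neither the multiplicity nor the singularities of that component along $X_y$. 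Since the dangerous configuration is precisely ``alternative~(i) fails and $X_y$ lies inside horizontal components of $D$'', and excluding it is essentially the theorem itself, this is not a removable technicality.

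The paper sidesteps this issue by never evaluating the constant in alternative~(ii): it argues by contradiction. If some $x$ had $\lct_x(D)<1/a$, then alternative~(i) must fail at $x$ for each fibration $\pi_\alpha$ (because the restricted lct is at least $1/a$), so by~(ii) the whole fiber through $x$ consists of such points; iterating this over the fibrations attached to all simple roots spreads the bad locus to all of $X$, forcing $\supp D=X$, which is absurd for a divisor. Note that this propagation argument combines perfectly with your first step: after pulling back to $G/B$, every relevant fibration has fiber $\mathbb{P}^1$, the fiberwise lct computation is elementary, and the contradiction closes the proof without ever invoking Theorem~\ref{thm:hwanggr} (the paper, working on $G/P$ directly, needs that theorem because its fibers are Picard-rank-one flag varieties such as Grassmannians). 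So the valuable part of your proposal --- the reduction to $G/B$ --- survives and would even lighten the paper's hypotheses on inputs, but the horizontal/vertical decomposition with induction on the number of nonzero coefficients must be abandoned in favor of the propagation-to-a-contradiction step.
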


\begin{remark}
One can show that every effective divisor on $G/P$
is linearly equivalent to an \emph{effective} $B$-stable
divisor. In other words, the classes of divisors $D_{\alpha}$ in the
$\QQ$-vector space $\Pic(G/P)\otimes\QQ$ span the
cone of effective divisors. Thus the assumption of Theorem~\ref{thm:main}
requiring that $a_{\alpha}$ are non-negative is implied by effectiveness of
$D$; we keep it just to make the assertion more transparent.
\end{remark}

\begin{remark}
If the $\QQ$-divisor $D$ of Theorem~\ref{thm:main} is $B$-stable,
then in addition to the inequality given by Theorem~\ref{thm:main}
we have an obvious opposite inequality, because $D_{\alpha}$
is an effective divisor. Therefore, in this case we recover the
equality given by~\cite[Theorem~3.1]{Pasquier15}.
\end{remark}

A by-product of Theorem~\ref{thm:main} is the following assertion
on global log canonical thresholds of complete flag varieties
that is well known to experts.

\begin{corollary}
One has $\lct(G/B)=1/2$.
\end{corollary}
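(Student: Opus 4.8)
The plan is to deduce the equality from Theorem~\ref{thm:main} by first rewriting the anticanonical class of $G/B$ in the basis of Schubert divisors from Proposition~\ref{prop:schubert}(i), and then bounding $\lct(G/B)$ from both sides. The first step is to record that $-K_{G/B}\sim\sum_{\alpha\in S}2D_\alpha$. Indeed, under the standard identification of $\Pic(G/B)$ with the weight lattice, the divisor $D_\alpha$ corresponds to the fundamental weight $\omega_\alpha$, while $-K_{G/B}$ corresponds to $\sum_{\beta>0}\beta=2\rho=\sum_{\alpha\in S}2\omega_\alpha$; since $-w_0$ permutes the fundamental weights and fixes $\rho$, the coefficients in the Schubert basis are unambiguously all equal to $2$. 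In particular $\Delta_0:=\sum_{\alpha\in S}2D_\alpha$ is an effective $B$-stable divisor with $\Delta_0\sim -K_{G/B}$.

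For the lower bound, let $\Delta\sim_\QQ -K_{G/B}$ be an arbitrary effective $\QQ$-divisor. Its class in $\Pic(G/B)\otimes\QQ$ coincides with that of $\Delta_0$, so in the notation of Theorem~\ref{thm:main} we may write $\Delta\sim\sum_\alpha a_\alpha D_\alpha$ with $a_\alpha=2$ for every $\alpha$. Theorem~\ref{thm:main}---which, crucially for this argument, applies to \emph{arbitrary} effective divisors and not only to $B$-stable ones---then yields $\lct(G/B,\Delta)\geq 1/\max a_\alpha=1/2$. Taking the infimum over all such $\Delta$ gives $\lct(G/B)\geq 1/2$.

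For the upper bound I would simply test the defining infimum on $\Delta_0$ itself. As $\Delta_0$ is effective and $\Delta_0\sim -K_{G/B}$, it is admissible in the infimum defining $\lct(G/B)$, so it is enough to check $\lct(G/B,\Delta_0)\leq 1/2$. This follows from the elementary fact that a prime divisor occurring with coefficient $c$ in an effective divisor forces the log canonical threshold to be at most $1/c$: at a general point $x$ of $D_\alpha$, which avoids all the other Schubert divisors $D_\beta$, the divisor $\Delta_0$ is locally $2D_\alpha$ with $D_\alpha$ smooth, so $\lct_x(\Delta_0)=1/2$ and hence $\lct(G/B,\Delta_0)\leq 1/2$.

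Combining the two bounds gives $\lct(G/B)=1/2$. I expect the only point requiring genuine care to be the identification $-K_{G/B}\sim\sum_\alpha 2D_\alpha$ with all coefficients equal to $2$, i.e.\ the passage through $2\rho=\sum_\alpha 2\omega_\alpha$ and the verification that this is the honest multiplicity of each $D_\alpha$; once this is in place, both inequalities are immediate consequences of Theorem~\ref{thm:main} and the standard multiplicity estimate for log canonical thresholds.
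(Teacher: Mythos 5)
Your proposal is correct and takes essentially the same route as the paper: both rest on the identity $-K_{G/B}\sim\sum_{\alpha\in S}2D_\alpha$, deduce the lower bound $\lct(G/B)\geq 1/2$ from Theorem~\ref{thm:main}, and get the upper bound from the fact that $\sum_{\alpha\in S}2D_\alpha$ is an effective representative of $-K_{G/B}$ with a component of multiplicity $2$. You merely spell out details the paper leaves implicit (the computation via $2\rho$ and the local estimate at a general point of $D_\alpha$), which is fine.
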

\begin{proof}
One has $-K_{G/B}\sim \sum_{\alpha\in S}2 D_{\alpha}$.
Thus $\lct(G/B)\geq 1/2$ by Theorem~\ref{thm:main}.
The opposite inequality is implied by the fact that the divisor
$D_{\alpha}$ is effective.
\end{proof}

\section{Proof of the main theorem}
\label{section:proof}

In this section we prove Theorem~\ref{thm:main}.

Fix a simple root $\alpha\in S\setminus I$. Let $J=I\cup\{\alpha\}$, and let $P'$ be the parabolic subgroup corresponding to $J$; then $P\subset P'$. There is a $G$-equivariant fibration~\mbox{$\pi_\alpha\colon G/P\to G/P'$}.

Let $X_\alpha$ be a fiber of this fibration. Consider the Dynkin diagram of $G$; its vertices correspond to simple roots from $S$. Let $\overline J$ be the connected component containing $\alpha$ of the subgraph spanned by the vertices of $J$. This component is the Dynkin diagram of a connected simple algebraic group $\overline G$. Let $\overline{P}_\alpha$ be a maximal parabolic subgroup of $\overline G$ with the set of roots $\overline J\setminus \{\alpha\}$. Then $X_\alpha$ is isomorphic to the $\overline G$-homogeneous space $\overline G/\overline{P}_\alpha\cong P'/P$.

\begin{example} Let $G=\SL_n(\CC)$. Its Dynkin diagram is $A_{n-1}$; denote its simple roots by $\alpha_1,\ldots,\alpha_{n-1}$. Put $I=S\setminus\{\alpha_{d_1},\ldots,\alpha_{d_r}\}$, where~\mbox{$1\leq d_1<\ldots<d_r\leq n-1$}. We also formally set $d_0=0$ and $d_{r+1}=n$.
Then $G/P$ is a partial flag variety
$$
\Fl(d_1,\ldots,d_r)\cong\{ U_1\subset\ldots\subset U_r\subset \CC^n\mid \dim U_j=d_j\}.
$$
Let $\alpha=\alpha_{d_s}$,
where $1\leq s\leq r$, and $J=I\cup \{\alpha\}$.
Then $G/P'$ is an $(r-1)$-step flag
variety $\Fl(d_1,\ldots,\widehat{d_s},\ldots,d_r)$,
and the map $\pi_\alpha\colon G/P\to G/P'$ is given by forgetting
the $s$-th component of each flag. The fibers of this
projection are isomorphic to the
Grassmannian $\Gr(d_s-d_{s-1},d_{s+1}-d_{s-1})$.
\end{example}

According to Proposition~\ref{prop:schubert} (i), since $\overline P_\alpha\subset \overline G$ is a maximal parabolic subgroup,
one has $\Pic X_\alpha\cong\ZZ$.
Let $H_\alpha$ be the ample generator of $\Pic X_\alpha$.

\begin{theorem}[{\cite[Theorem 2]{Hwang07}; see also \cite{Hwang06}}]\label{thm:hwanggr}
Let $k$ be a positive integer. Then one has~\mbox{$\lct(X_\alpha, kH_\alpha)=1/k$}.
\end{theorem}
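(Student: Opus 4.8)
The plan is to establish the two inequalities separately; the bound $\lct(X_\alpha,kH_\alpha)\leq 1/k$ is elementary, while $\lct(X_\alpha,kH_\alpha)\geq 1/k$ carries all the content. For the upper bound, recall that the ample generator $H_\alpha$ of $\Pic X_\alpha\cong\ZZ$ is very ample, realizing the minimal homogeneous embedding of the rational homogeneous space $X_\alpha=\overline G/\overline P_\alpha$; in particular $|H_\alpha|$ is base point free. By Bertini's theorem a general member $\Delta_0\in|H_\alpha|$ is then smooth, so that $\lct(X_\alpha,\Delta_0)=1$. Taking $k\Delta_0\in|kH_\alpha|$ and using the homogeneity of the log canonical threshold, $\lct(X_\alpha,k\Delta_0)=\frac1k\lct(X_\alpha,\Delta_0)=1/k$, I would conclude $\lct(X_\alpha,kH_\alpha)\leq 1/k$.

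For the lower bound I must show that every $\Delta\in|kH_\alpha|$ satisfies $\lct(X_\alpha,\Delta)\geq 1/k$; equivalently, that every effective $\QQ$-divisor numerically equivalent to $H_\alpha$ defines a log canonical pair. The main geometric input is that $X_\alpha$ is covered by its minimal rational curves, the $B$-stable lines $\ell$, which by Proposition~\ref{prop:schubert}(ii) satisfy $H_\alpha\cdot\ell=1$, so that $\Delta\cdot\ell=k$ for every $\Delta\in|kH_\alpha|$. Fixing a point $x$, I would then distinguish two cases. If some line $\ell$ through $x$ is not contained in $\Delta$, then the local intersection multiplicity satisfies $\mult_x\Delta\leq i_x(\Delta,\ell)\leq\Delta\cdot\ell=k$, and the standard bound $\lct_x(\Delta)\geq 1/\mult_x(\Delta)$ (see \cite[\S8]{Kollar97}) immediately gives $\lct_x(\Delta)\geq 1/k$.

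The remaining case---and the main obstacle---is when every line through $x$ lies in $\Delta$, for then $\mult_x\Delta$ may exceed $k$ and the crude multiplicity bound is lost: already for $X_\alpha=\Gr(2,4)$ a tangent hyperplane section acquires a node, with $\mult_x\Delta=2>1$. To treat this case I would pass to the variety of minimal rational tangents $\mathcal{C}_x\subset\mathbb{P}(T_xX_\alpha)$, which for a rational homogeneous space is again homogeneous and non-degenerate; the hypothesis that all lines through $x$ lie in $\Delta$ forces the tangent cone of $\Delta$ at $x$ to contain the affine cone over $\mathcal{C}_x$. Bounding $\lct_x(\Delta)$ from below then reduces to a log canonical threshold estimate on $T_xX_\alpha$ controlled by the geometry of $\mathcal{C}_x$, which one can hope to run inductively since $\mathcal{C}_x$ is itself built from lower-dimensional flag varieties. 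This variety-of-minimal-rational-tangents analysis is the technical heart of Hwang's argument, and I expect it, rather than the reductions above, to be where essentially all the difficulty lies.
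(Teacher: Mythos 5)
First, a point of comparison: the paper does not actually prove this statement. The lower bound $\lct(X_\alpha,kH_\alpha)\geq 1/k$, which carries all of the content, is imported verbatim as \cite[Theorem~2]{Hwang07}; the paper's only original contribution here is the remark that the equality form follows from the inequality form because $|H_\alpha|\neq\varnothing$. Your upper bound correctly establishes that easy half, in fact with more effort than necessary: Bertini is not needed, since any nonzero effective Cartier divisor $\Delta_0$ on a smooth variety has $\lct(X_\alpha,\Delta_0)\leq 1$, so $k\Delta_0\in|kH_\alpha|$ already gives $\lct(X_\alpha,kH_\alpha)\leq 1/k$. Your ``easy case'' of the lower bound (a line through $x$ not contained in $\Delta$, hence $\mult_x\Delta\leq k$) is also correct, modulo the slip that the $B$-stable lines themselves do not cover $X_\alpha$ --- only finitely many curves are $B$-stable; it is their $\overline G$-translates that cover $X_\alpha$, and these have the same degree.

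The genuine gap is exactly where you say it is: the case in which every line through $x$ lies in $\supp\Delta$ is never proved; ``one can hope to run inductively'' is a research plan, not an argument, and your own example of a tangent hyperplane section of $\Gr(2,4)$ shows this case is unavoidable. What you have missed is that this case requires no local (tangent-cone/VMRT) analysis at $x$ at all: it is handled by the product theorem, stated in this very paper as Theorem~\ref{theorem:Hwang}, which is precisely the tool \cite{Hwang07} introduces in order to prove the cited result. Concretely, let $M$ be the variety of $\overline G$-translates of the Schubert line (a projective homogeneous space, since the stabilizer of a $B$-stable line contains a Borel subgroup and is therefore parabolic), and let $U\subset X_\alpha\times M$ be the universal family, with its two $\overline G$-equivariant, hence smooth, projections $\mu\colon U\to X_\alpha$ and $\rho\colon U\to M$. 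Suppose $\lct_x(\Delta)<1/k$ and let $\ell$ be any line through $x$. The restriction of $\mu^*\mathcal{O}(\Delta)$ to the fiber $\rho^{-1}([\ell])\cong\mathbb{P}^1$ is $\mathcal{O}_{\mathbb{P}^1}(k)$, and $\lct\bigl(\mathbb{P}^1,\mathcal{O}_{\mathbb{P}^1}(k)\bigr)=1/k$; since $\lct$ is invariant under smooth pullback (cf.\ \cite[\S8]{Kollar97}), the point of $\rho^{-1}([\ell])$ lying over $x$ has $\lct<1/k$, so alternative (i) of Theorem~\ref{theorem:Hwang} fails for this fiber, and alternative (ii) forces $\lct_y(\Delta)<1/k$ for every $y\in\ell$ --- whether or not $\ell\subset\supp\Delta$. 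Thus the locus $\{\lct<1/k\}$ contains every line meeting it; since $\overline P_\alpha$ is a maximal parabolic, chains of such lines connect any two points of $X_\alpha$, so this locus would be all of $X_\alpha$, contradicting $\lct_y(\Delta)=+\infty$ for $y\notin\supp\Delta$. This propagation argument is structurally identical to the paper's own proof of Theorem~\ref{thm:main}, run one level down; with it your hard case dissolves, and the VMRT machinery you invoke is never needed.
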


\begin{remark}
The assertion of \cite[Theorem 2]{Hwang07} is that the inequa\-lity~\mbox{$\lct(X_\alpha, kH_\alpha)\geq 1/k$} holds.
This is equivalent to Theorem~\ref{thm:hwanggr} since the linear system $|H_{\alpha}|$ is always
non-empty by Proposition~\ref{prop:schubert}(i).
\end{remark}

The following computation will be the central point of our proof of Theorem~\ref{thm:main}.

\begin{prop}\label{prop:restriction}  For each $\beta\in S\setminus I$ one has
\[
D_\alpha\vert_{X_\beta}\sim \begin{cases} H_\alpha & \text{if }\alpha=\beta;\\
0 & \text{otherwise}.
\end{cases}
\]
\end{prop}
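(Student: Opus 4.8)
The plan is to use that $\Pic X_\beta\cong\ZZ$, so that the restricted line bundle $\mathcal{O}(D_\alpha)|_{X_\beta}$ is forced to be an integer multiple $m\,H_\beta$ of the ample generator, and then to determine $m$ by intersecting against a single well-chosen curve. Here $X_\beta$ denotes the fiber of the fibration $\pi_\beta\colon G/P\to G/P'_\beta$, where $P'_\beta$ is the parabolic corresponding to $I\cup\{\beta\}$; since the associated maximal parabolic $\overline P_\beta$ sits inside the simple group $\overline G$, Proposition~\ref{prop:schubert}(i) applied inside $X_\beta\cong\overline G/\overline P_\beta$ indeed gives $\Pic X_\beta\cong\ZZ=\langle H_\beta\rangle$. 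The whole proof then reduces to identifying the curve dual to $H_\beta$ and computing one intersection number.

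First I would exhibit the relevant curve, namely the one-dimensional Schubert variety $C_\beta=\overline{B s_\beta P/P}$, and check that $C_\beta\subset X_\beta$. It passes through the base point $eP/P\in X_\beta$, and its image $\pi_\beta(C_\beta)=\overline{B s_\beta P'_\beta/P'_\beta}$ is a single point, because $s_\beta\in W_{P'_\beta}$ forces $s_\beta P'_\beta=P'_\beta$; hence $C_\beta$ lies in the fiber over $eP'_\beta/P'_\beta$. Under the isomorphism $X_\beta\cong\overline G/\overline P_\beta$ the curve $C_\beta$ becomes the unique one-dimensional Schubert variety of $X_\beta$, so Proposition~\ref{prop:schubert}(ii) applied within $X_\beta$ gives $H_\beta\cdot C_\beta=1$; in other words $C_\beta$ is the curve dual to the ample generator.

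Next I would compute the degree of the restricted bundle along $C_\beta$. Since $C_\beta\subset X_\beta$, restricting $\mathcal{O}(D_\alpha)$ first to $X_\beta$ and then to $C_\beta$ agrees with restricting directly to $C_\beta$, so
\[
\deg\left(\mathcal{O}(D_\alpha)\big|_{X_\beta}\big|_{C_\beta}\right)
= D_\alpha\cdot C_\beta
= D_\alpha\smile[\overline{B s_\beta P/P}]
= \delta_{\alpha\beta},
\]
the last equality being precisely the duality of Proposition~\ref{prop:schubert}(ii). On the other hand, writing $\mathcal{O}(D_\alpha)|_{X_\beta}\sim m\,H_\beta$ and using $H_\beta\cdot C_\beta=1$ shows that this degree is exactly $m$. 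Therefore $m=\delta_{\alpha\beta}$, which means $\mathcal{O}(D_\alpha)|_{X_\beta}\sim H_\alpha$ when $\alpha=\beta$ (since then $H_\beta=H_\alpha$) and $\mathcal{O}(D_\alpha)|_{X_\beta}\sim 0$ otherwise, as claimed.

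I expect the only genuinely delicate point to be the middle step: confirming that $C_\beta=\overline{B s_\beta P/P}$ both lies in the fiber $X_\beta$ and represents the Schubert curve of $X_\beta$ dual to $H_\beta$. This is a matter of matching the Borel subgroup, the Schubert stratification, and the ample generator of the fiber against those of the ambient variety through the chain of identifications $X_\beta\cong P'_\beta/P\cong\overline G/\overline P_\beta$; once this bookkeeping is settled, the degree computation is immediate, and the argument avoids any explicit log resolution.
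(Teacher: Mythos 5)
Your proof is correct and follows essentially the same route as the paper: both identify $X_\beta$ with $P'/P$, use that $\Pic X_\beta\cong\ZZ$ with the Schubert curve $\overline{Bs_\beta P/P}$ Poincar\'e dual to $H_\beta$, and read off the coefficient from the intersection number $D_\alpha\smile[\overline{Bs_\beta P/P}]=\delta_{\alpha\beta}$ of Proposition~\ref{prop:schubert}(ii). Your explicit check that $\overline{Bs_\beta P/P}$ lies in the fiber (via $s_\beta\in W_{P'_\beta}$) is a detail the paper leaves implicit, but the argument is the same.
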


\begin{proof} As we discussed above, the fiber $X_\beta$ can be identified with the variety~\mbox{$P'/P\subset G/P$}. It is a flag variety with the Picard group of rank one; since the classes of $B$-stable curves are dual to the classes of ($B$-stable) divisors (see Proposition~\ref{prop:schubert} (ii)), $X_\beta$ contains a unique $B$-stable curve. This curve has the form $\overline{Bs_\beta P/P}\subset P'/P\cong X_\beta$.  Its class in $H^\udot(X_\beta,\ZZ)$ is Poincar\'e dual to the ample generator $H_\beta$ of $\Pic(X_\beta,\ZZ)$.


At the same time, as it was stated in Proposition~\ref{prop:schubert} (ii), the intersection of~$\overline{Bs_\beta P/P}$ with $D_\alpha$ equals the class of a point if $\alpha=\beta$ and zero otherwise.
\end{proof}

\begin{proof}[Proof of Theorem~\ref{thm:main}.]
Replacing $D$ by its appropriate multiple, we may assume that it is a Cartier divisor, not just
a $\QQ$-divisor. Put $a=\max a_{\alpha}$. Suppose that $\lct(X,D)<1/a$.

Pick a point $x\in X$ such that $\lct_x(D)<1/a$. Choose an index $\alpha$ from $S\setminus I$,
and let $[D|_{X_\alpha}]$ be the class of the restriction of $\mathcal{O}_X(D)$ to $X_{\alpha}$ in $\Pic(X_{\alpha})$.
According to Proposition~\ref{prop:restriction}, one has $[D|_{X_\alpha}]\sim a_\alpha H_\alpha$.  Theorem~\ref{thm:hwanggr} implies that
$$
\lct(X_\alpha, [D|_{X_\alpha}])\geq \frac{1}{a_{\alpha}}\geq \frac{1}{a};
$$
this trivially includes the case when $a_{\alpha}=0$.
Without loss of generality we can suppose that the fiber $X_\alpha$ passes through the point $x$.
The above means that the alternative (i) in Theorem~\ref{theorem:Hwang} never holds.

Let $\alpha_1,\ldots,\alpha_r$ be the simple roots from $S\setminus I$. Let $\widetilde X_1$ be the fiber of~$\pi_{\alpha_1}$ passing through the point $x$. For each $i=2,\ldots,r$ let $\widetilde X_i$ be the union of all fibers of $\pi_{\alpha_i}$ passing through the points of $\widetilde X_{i-1}$. In particular, one has~\mbox{$\widetilde X_r=X$}.

First apply Theorem~\ref{theorem:Hwang} to the point $x$ and the fibration $\pi_{\alpha_1}$. It implies that for each $x_1\in \widetilde X_1$ we have $\lct_{x_1}(D)<1/a$. Now apply it to each point $x_1\in \widetilde X_1$ and the fibration $\pi_{\alpha_2}$. We see that for each $x_2\in\widetilde X_2$ the inequality $\lct_{x_2}(D)<1/a$ holds. Proceeding by induction, we obtain the same inequality for every point in~\mbox{$\widetilde X_r=X$}. In particular, each point of $X$ is contained in the support of $D$, which is a contradiction.
\end{proof}

\def\cprime{$'$}

\end{document}